\providecommand{\U}[1]{\protect\rule{.1in}{.1in}}
\newtheorem{theorem}{Theorem}[section]
\newtheorem{corollary}[theorem]{Corollary}
\newtheorem{lemma}[theorem]{Lemma}
\newtheorem*{theorem*}{Theorem}
\theoremstyle{definition}
\newtheorem{definition}[theorem]{Definition}
\theoremstyle{remark}
\newenvironment{remark}
  {\pushQED{\qed}\remarkx}
  {\popQED\endremarkx}
\thanks{}
\numberwithin{equation}{section}
\def\@tocline#1#2#3#4#5#6#7{\relax
  \ifnum #1>\c@tocdepth 
  \else
    \par \addpenalty\@secpenalty\addvspace{#2}%
    \begingroup \hyphenpenalty\@M
    \@ifempty{#4}{%
      \@tempdima\csname r@tocindent\number#1\endcsname\relax
    }{%
      \@tempdima#4\relax
    }%
    \parindent\z@ \leftskip#3\relax \advance\leftskip\@tempdima\relax
    \rightskip\@pnumwidth plus4em \parfillskip-\@pnumwidth
    #5\leavevmode\hskip-\@tempdima
      \ifcase #1
       \or\or \hskip 1.5 em \or \hskip 2em \else \hskip 3em \fi%
      #6\nobreak\relax
    \hfill\hbox to\@pnumwidth{\@tocpagenum{#7}}\par
    \nobreak
    \endgroup
  \fi}
\begin{document}
\newcommand{\R}{{\mathbbm R}}
\newcommand{\C}{{\mathbbm C}} 
\newcommand{\T}{{\mathbbm T}}
\newcommand{\D}{{\mathbbm D}}
\renewcommand{\P}{\mathbb P}

\newcommand{\Aa}{{\mathcal A}}
\newcommand{\Ii}{{\mathbbm K}}
\newcommand{\Jj}{{\mathbbm J}}
\newcommand{\Nn}{{\mathcal N}}
\newcommand{\Ll}{{\mathcal L}}
\newcommand{\Tt}{{\mathcal T}}
\newcommand{\Gg}{{\mathcal G}}
\newcommand{\Dd}{{\mathcal D}}
\newcommand{\Cc}{{\mathcal C}}
\newcommand{\Oo}{{\mathcal O}}

\newcommand{\pr}{\operatorname{pr}}
\newcommand{\bla}{\langle \hspace{-2.7pt} \langle}
\newcommand{\bra}{\rangle\hspace{-2.7pt} \rangle}
\newcommand{\blq}{[ \! [}
\newcommand{\brq}{] \! ]}
 \newcommand{\into}{\mathbin{\vrule width1.5ex height.4pt\vrule height1.5ex}}

\title[Deformation Cohomology of Lie Algebroids and Morita Equivalence]{Deformation Cohomology of Lie Algebroids\\ and Morita Equivalence}

\author{Giovanni Sparano}
\address{DipMat, Universit\`a degli Studi di Salerno, via Giovanni Paolo II n${}^\circ$ 123, 84084 Fisciano (SA) Italy.}
\email{sparano@unisa.it}

\author{Luca Vitagliano}
\address{DipMat, Universit\`a degli Studi di Salerno, via Giovanni Paolo II n${}^\circ$ 123, 84084 Fisciano (SA) Italy.}
\email{lvitagliano@unisa.it}


\maketitle

\begin{abstract}
Let $A \Rightarrow M$ be a Lie algebroid. In this short note, we prove that a pull-back of $A$ along a fibration with homologically $m$-connected fibers shares the same deformation cohomology of $A$ up to degree $m$.
\end{abstract}

\tableofcontents

\section{Introduction}

Lie groupoids can be understood as atlases on certain singular spaces,
specifically, \emph{differentiable stacks} and (by the very definition
of stack) two Lie groupoids are Morita equivalent if they give rise to
the same differentiable stack \cite{BX2011}. This means that, when
using Lie groupoids to model differentiable stacks, Morita invariants
describe the intrinsic geometry of the stack. For instance, Lie groupoid
cohomology, and the deformation cohomology of a Lie groupoid are Morita
invariants, but there are more many examples. The terminology is
motivated by the fact that the relationship between a Lie groupoid and
its stack is analogous to the relationship between a ring and its
category of modules.

Lie algebroids are infinitesimal counterparts of Lie groupoids. However,
the former are more general than the latter in the sense that, while all
Lie groupoids \emph{differentiate} to a Lie algebroid, not all Lie
algebroids \emph{integrate} to a Lie groupoid. A consequence of this
is that there is not a notion of Morita equivalence of
Lie algebroids which is universally good, but there are several non-equivalent alternatives. The
weakest (but reasonable) possible one is the
\emph{weak Morita equivalence} introduced by Ginzburg in
\cite{G2001}. For Poisson manifolds, this notion is weaker than Xu's
Morita equivalence \cite{X1991}, but it makes sense for non-integrable
Poisson manifolds.

In a similar way as for Lie groupoids, it is important to identify as
many Morita invariants of Lie algebroids as possible. In
\cite{C2003}, Crainic proves (a statement equivalent to the fact) that
if two Lie algebroids are Morita equivalent in a suitable sense, then
they share the same de Rham cohomology in low degree. In this note, we
prove the analogous result for the deformation cohomology of Lie
algebroids. Notice that, for Lie groupoids, the Morita invariance of Lie
groupoid cohomology has been proved by Crainic himself in
\cite{C2003}, while the deformation cohomology has been introduced, and
its Morita invariance has been proved, only very recently, by Crainic,
Mestre, and Struchiner in \cite{CMS2015}.

We assume that the reader is familiar with Lie algebroids and their
description in terms of graded manifolds. We only recall that a degree
$k$ $\mathbb{N}$-manifold is a graded manifold whose coordinates are
concentrated in non-negative degree up to degree $k$, and an
$\mathbb{N} Q$-manifold is an $\mathbb{N}$-manifold equipped with an
homological vector field. For instance, if $A \Rightarrow M$ is a Lie
algebroid, then shifting by one the degree of the fibers of the vector
bundle $A \to M$, we get a degree $1$ $\mathbb{N} Q$-manifold whose
homological vector field is the de Rham differential $\mathrm{d}_{A}$ of $A$.
Correspondence $A \rightsquigarrow A[1]$ establishes an equivalence
between the category of Lie algebroids and the category
of degree-$1$ $\mathbb{N} Q$-manifolds.

\section{The deformation complex of a Lie algebroid}

Let $A \Rightarrow M$ be a Lie algebroid. In degree $k$, the deformation
complex of $A$,
denoted by
$(C_{\mathrm{def}} (A), \delta )$, consists of
$(k+1)$-multiderivations of $A$, i.e.~$\mathbb{R}$-$(k+1)$-linear maps
\begin{equation*}
c : \Gamma (A) \times \cdots \times \Gamma (A) \to \Gamma (A)
\end{equation*}
such that there exists a (necessarily unique) vector bundle map
$s_{c} : \wedge ^{k} A \to TM$ with $c$ and $s_{c}$ satisfying the
following Leibniz rule
\begin{equation*}
c(\alpha _{1}, \ldots , \alpha _{k}, f \alpha _{k+1}) = s_{c} (\alpha
_{1}, \ldots , \alpha _{k})(f) \alpha _{k+1} + f c(\alpha _{1}, \ldots ,
\alpha _{k}, \alpha _{k+1}),
\end{equation*}
for all $\alpha _{1}, \ldots , \alpha _{k+1} \in \Gamma (A)$, and
$f \in C^{\infty }(M)$. The differential $\delta $ is then defined as
\begin{equation*}
\begin{aligned}
\delta c (\alpha _{0}, \ldots , \alpha _{k+1})
& = \sum _{i} (-)^{i} [
\alpha _{i}, c(\alpha _{0}, \ldots , \widehat{\alpha _{i}}, \ldots ,
\alpha _{k+1})] \\
& \quad + \sum _{i<j} (-)^{i+j} c([\alpha _{i}, \alpha
_{j}], \alpha _{0}, \ldots , \widehat{\alpha _{i}}, \ldots ,
\widehat{\alpha _{j}}, \ldots ,\alpha _{k+1}),
\end{aligned}
\end{equation*}
for all $\alpha _{0}, \ldots , \alpha _{k+1} \in \Gamma (A)$. Complex $(C_{\mathrm{def}}(A), \delta)$
appeared for the first time in \cite{Janusz} under the name \emph{complex of multi-quasi-derivations}.
Its cohomology is called the \emph{deformation cohomology} of
$A$ and it is denoted by $H_{\mathrm{def}}(A)$ \cite{CM2008}.

Actually, $C_{\mathrm{def}} (A)$ is not just a complex, but it is the DG
Lie algebra (even more a DG Lie--Rinehart algebra over the de Rham
algebra of $A$) controlling deformations of $A$, in the sense that
\begin{itemize}
\item[$\triangleright $] Lie algebroid structures on $A$ corresponds
bijectively to Maurer--Cartan elements in $C_{\mathrm{def}} (A)$, and
\item[$\triangleright $] if two Lie algebroid structures are isotopic,
the corresponding Maurer--Cartan elements are gauge equivalent, and the
converse is also true when $M$ is compact.
\end{itemize}
There is a simple alternative description of $C _{\mathrm{def}}(A)$ as
the DG Lie algebra of graded derivations of the de Rham algebra
$(C (A), \mathrm{d}_{A})$, where $C (A) = \Gamma ( \wedge ^{\bullet }A^{\ast })$,
and $\mathrm{d}_{A}$ is the usual Lie algebroid de Rham differential. A cochain
$c \in C^{k}_{\mathrm{def}} (A)$ corresponds to the degree $k$
derivation $D_{c}$ mapping $\omega \in C^{l} (A)$ to $D_{c} \omega
\in C^{k+l} (A)$, with
\begin{equation*}
\begin{aligned}
D_{c} \omega (\alpha _{1}, \ldots , \alpha _{k+l})
& =
\sum _{\sigma \in S_{k, l}} (-)^{\sigma }s_{c}(\alpha _{\sigma (1)},
\ldots , \alpha _{\sigma (k)})(\omega (\alpha _{\sigma (k+1)}, \ldots
\alpha _{\sigma (k+ l)})) \\
& \quad - \sum _{\sigma \in S_{k+1, l-1}} (-)^{
\sigma } \omega (c(\alpha _{\sigma (1)}, \ldots , \alpha _{\sigma (k+1)}),
\alpha _{\sigma (k+2)}, \ldots \alpha _{\sigma (k+ l)}).
\end{aligned}
\end{equation*}

When taking this point of view, the Lie bracket in $C
_{\mathrm{def}}(A)$ is just the graded commutator $[-,-]$ of derivations
and $\delta $ is $[\mathrm{d}_{A}, -]$. Finally, we can interpret $C (A)$ as the
DG algebra of smooth functions on the degree $1$ $\mathbb{N} Q$-manifold
$A[1]$, and then cochains in $C_{\mathrm{def}} (A)$ are just vector
fields on $A[1]$:
\begin{equation*}
C (A) = C^{\infty }(A[1]), \quad \text{and} \quad C_{\mathrm{def}} (A)
= \mathfrak{X} (A[1]).
\end{equation*}
In the following, we will mostly take this point of view.

Given two Lie algebroids $A \Rightarrow M$ and $B \Rightarrow N$ and a
Lie algebroid map $F : A \to B$ covering a smooth map $M \to N$, there
is a DG algebra map $F^{\ast }: C (B) \to C (A)$. One can also connect
the deformation complexes as follows. Apply the shift functor to $F$ to
get a map of $\mathbb{N} Q$-manifolds: $F[1] : A[1] \to B[1]$, and
denote by $C (F)$ the space of $F[1]$-relative vector fields,
i.e.~graded $\mathbb{R}$-linear maps $Z : C (B) \to C (A)$ satisfying
the following Leibniz rule
\begin{equation*}
Z(\omega _{1} \wedge \omega _{2}) = Z(\omega _{1}) \wedge F^{\ast }(
\omega ) + (-)^{\vert Z\vert |\omega _{1}|} F^{\ast }(\omega _{1}) \wedge Z(
\omega _{2}).
\end{equation*}
In other words, $C (F) = C (A) \otimes C _{\mathrm{def}} (B)$, where the
tensor product is over $C (B)$, and we changed the scalars via
$F^{\ast }: C (B) \to C (A)$. Yet in other (more geometric) terms,
$C (F)$ consists of sections of the graded vector bundle
$F^{\ast }(TB[1]) \to A[1]$. Clearly, $C (F)$ is a DG $(C (A), C
(B))$-bimodule, whose differential $\delta : C (F) \to C (F)$ is given
by
\begin{equation*}
\delta Z = \mathrm{d}_{A} \circ Z - (-)^{Z} Z \circ \mathrm{d}_{B}, \quad Z \in C(F).
\end{equation*}
Additionally, there are DG module maps:
\begin{equation*}
C_{\mathrm{def}} (A) \mathop{\longrightarrow }^{F_{\star }} C (F)
\mathop{\longleftarrow }^{F^{\star }} C_{\mathrm{def}} (B)
\end{equation*}
given by
\begin{equation*}
F_{\star }: X \mapsto X \circ F^{\ast }, \quad \text{and} \quad F^{
\star }: Y \mapsto F^{\ast }\circ Y.
\end{equation*}

\section{Morita equivalence of Lie algebroids}

There is no \emph{universally good} notion of Morita equivalence for Lie
algebroids. Actually, there are several morally similar but inequivalent
definitions, all of which involve the notion of
\emph{pull-back Lie algebroid}. Let $A \Rightarrow M$ be a Lie algebroid
with anchor $\rho : A \to TM$, and let $\pi : P \to M$ be a surjective
submersion. Put
\begin{equation*}
\pi ^{!} A := TP \, {{}_{\textrm{d}\pi } \times _{\rho }}\, A = \{ (v, a) \in TP
\times A : \textrm{d}\pi (v) = \rho (a) \}.
\end{equation*}
Then $\pi ^{!} A$ is a Lie algebroid over $P$ in the following way. First
of all, sections of $\pi ^{!} A \to P$ are pairs $(X, \alpha )$, where
$X$ is a vector field on $P$ and $\alpha $ is a section of the pull-back
bundle $\pi ^{\ast }A \to P$. Additionally $\textrm{d} \pi (X_{p}) = \rho (
\alpha _{p})$ for all $p \in P$. It is easy to see that there exists a
unique Lie algebroid structure $\pi ^{!} A \Rightarrow P$ such that the
anchor $\pi ^{!} A \to TP$ is the projection $(X, \alpha ) \mapsto X$,
and the bracket is
\begin{equation*}
\left [  (X, \pi ^{\ast }\alpha ), (Y, \pi ^{\ast }\beta ) \right ]  =
\left (  [X,Y], \pi ^{\ast }[\alpha , \beta ]\right )
\end{equation*}
on sections of the special form $(X, \pi ^{\ast }\alpha ), (Y, \pi ^{
\ast }\beta )$, with $\alpha , \beta \in \Gamma (A)$. It follows that
the natural projection $\Pi : \pi ^{!} A \to A$ is a Lie algebroid map
(covering $\pi : P \to M$). In particular, there are DG module maps
\begin{equation*}
C_{\mathrm{def}} (\pi ^{!} A)
\mathop{\longrightarrow }^{\Pi _{\star }} C (\Pi )
\mathop{\longleftarrow }^{\Pi ^{\star }} C_{\mathrm{def}} (A).
\end{equation*}
It is worth remarking, for later applications, that there is a short exact
sequence of vector bundles over $P$,
%
\begin{equation}
\label{eq:SES}
0 \mathop{\longrightarrow } VP \mathop{\longrightarrow } \pi ^{!} A \mathop{\longrightarrow } \pi ^{\ast }A
\mathop{\longrightarrow } 0,
\end{equation}
where $VP$ is the vertical tangent bundle of $P \to M$, and the
inclusion $VP \hookrightarrow \pi ^{!} A$ maps a vertical vector field
$X$ to $(X, 0)$. Clearly, the projection $\pi ^{!} A \rightarrow \pi ^{
\ast }A$ maps $(X, \alpha )$ to $\alpha $.

\begin{definition}[Ginzburg \cite{G2001}]
\label{def:Morita_eq}
Two Lie algebroids $A \Rightarrow M$ and $B \Rightarrow N$ are
(\emph{weak}) \emph{Morita equivalent} if there exist surjective
submersions
\begin{equation*}
M \mathop{\longleftarrow }^{\pi } P \mathop{\longrightarrow }^{\tau }
N
\end{equation*}
with simply connected fibers, such that the pull-back Lie algebroids
$\pi ^{!} A$ and $\tau ^{!} B$ are isomorphic.
\end{definition}

\begin{remark}
Let $A \Rightarrow M$ be a Lie algebroid, $P \to M$ be a surjective
submersion and let $E \to M$ be a vector bundle carrying a
representation of $A$. Then $\pi ^{\ast }E \to P$ carries a
representation of the pull-back Lie algebroid $\pi ^{!} A$. Definition~\ref{def:Morita_eq}, originally due to Ginzburg, is then motivated by
the fact that, if $\pi $ has simply connected fibers, correspondence
$E \rightsquigarrow \pi ^{\ast }E$ establishes an equivalence between the
categories of $A$-representations and of $\pi ^{!} A$-representations.
However, other reasonable definitions of Morita equivalence are possible.
For instance, one could require submersions $\pi , \tau $ to have
fibers with specific, higher connectedness (or even cohomological
connectedness) properties.
\end{remark}

\begin{remark}
Let $A \Rightarrow M$ and $B \Rightarrow N$ be weak Morita equivalent
Lie algebroids, and let $A \leftarrow P \rightarrow B$ be surjective
submersions realizing the equivalence. Then $A$ and $B$ share several
properties. For instance, there is a bijection between their leaf
spaces, and corresponding leaves have the same fundamental group.
Additionally, the Lie algebroid structures \emph{transverse} to
corresponding leaves are isomorphic and so are the stabilizer of $A$ at
$x \in M$ and the stabilizer of $B$ at $y \in N$ whenever $x,y$ are projections of the
same point in $P$ \cite{G2001}. Finally, $A$ and $B$ share the same
$0$-th and $1$-st de Rham cohomology (see also Theorem~\ref{theor:dR_cohom} below).
\end{remark}

We conclude this section by describing the de Rham complex of a pull-back
Lie algebroid $\pi ^{!} A \Rightarrow P$. To do this, we will interpret
$C (A)$ as a DG algebra over differential forms on $M$ via the pull-back
$\rho ^{\ast }: \Omega (M) \to C (A)$ along the anchor.

\begin{lemma}
\label{lemma}
There are canonical isomorphisms of DG modules
%
\begin{equation}
\label{eq:isom}
C (\pi ^{!} A) = \Omega (P) \!\mathop{\otimes }_{\Omega (M)}\! C (A)
\quad \text{and} \quad C (\Pi ) = \Omega (P) \!\mathop{\otimes }_{\Omega (M)}\! C _{\mathrm{def}}(A).
\end{equation}
\end{lemma}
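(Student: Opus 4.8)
The plan is to work geometrically with the $\mathbb{N} Q$-manifolds. We interpret $C(A) = C^\infty(A[1])$ and $\Omega(M) = C^\infty(T[1]M)$, and the anchor-pullback $\rho^\ast \colon \Omega(M) \to C(A)$ is dual to the shifted anchor $\rho[1] \colon A[1] \to T[1]M$. The key observation is that, by definition, $\pi^! A = TP \,{}_{\mathrm{d}\pi}\!\times_\rho A$ is a fibered product, and this is preserved under the shift functor: $(\pi^! A)[1] = T[1]P \,{}_{(\mathrm{d}\pi)[1]}\!\times_{\rho[1]} A[1]$, where the map $T[1]P \to T[1]M$ is the shifted differential of $\pi$. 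Dually, a fibered product of graded manifolds corresponds to a (graded) tensor product of the function algebras over the common base. Thus $C^\infty((\pi^!A)[1]) = C^\infty(T[1]P) \otimes_{C^\infty(T[1]M)} C^\infty(A[1])$, which is exactly the first claimed identity $C(\pi^! A) = \Omega(P) \otimes_{\Omega(M)} C(A)$; one then checks that the homological vector field on $(\pi^! A)[1]$ restricts from the product of $\mathrm{d}_{\mathrm{dR}}$ on $T[1]P$ and $\mathrm{d}_A$ on $A[1]$, so the identification is one of DG modules.

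For the second identity, I would use that $C(\Pi)$ consists of sections of $(\Pi[1])^\ast(TA[1]) \to (\pi^!A)[1]$, i.e.~of $\Pi[1]$-relative vector fields. Since $(\pi^!A)[1] \to A[1]$ is (the base-change along $\pi$ of) the projection, and since $T$ commutes with the fibered product, one has $(\Pi[1])^\ast(T(A[1])) = (\pi^!A)[1] \times_{A[1]} T(A[1])$, whose sections are $C(\pi^! A) \otimes_{C(A)} \mathfrak{X}(A[1]) = C(\pi^! A) \otimes_{C(A)} C_{\mathrm{def}}(A)$. Combining this with the first identity $C(\pi^! A) = \Omega(P) \otimes_{\Omega(M)} C(A)$ and associativity of the tensor product gives $C(\Pi) = \Omega(P) \otimes_{\Omega(M)} C_{\mathrm{def}}(A)$. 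The differential matches because $\delta Z = \mathrm{d}_{\pi^! A}\circ Z - (-)^Z Z \circ \mathrm{d}_A$ is built from the homological vector fields already identified in the first step.

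Alternatively, and perhaps more concretely, one can argue at the level of the short exact sequence \eqref{eq:SES}. Dualizing and taking exterior powers, the splitting principle gives $\wedge^\bullet(\pi^! A)^\ast$ as the $C^\infty(P)$-module generated by $\pi^\ast \wedge^\bullet A^\ast$ and $\wedge^\bullet V^\ast P = \Omega^\bullet_{P/M}$, which reassembles $\Omega(P)$; tracking the de Rham differential then yields the tensor-product description directly. The main obstacle in either route is bookkeeping rather than conceptual: one must verify carefully that the bracket on $\pi^! A$, which is only specified on the special sections $(X, \pi^\ast\alpha)$, induces precisely the product differential on the tensor product (equivalently, that no correction terms appear), and that the $C(A)$-bimodule structure on $C(\Pi)$ is compatible with the change of rings $\rho^\ast$. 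I expect the graded-manifold argument to make this essentially automatic, with the only real care needed being signs and the fact that $\mathrm{d}\pi(X) = \rho(\alpha)$ for sections $(X,\alpha)$ — which is exactly the fibered-product condition that makes the tensor product the correct one.
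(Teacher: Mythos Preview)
Your proposal is correct and follows essentially the same route as the paper: you observe that the shift functor preserves the pull-back diagram defining $\pi^{!}A$, so functions on $(\pi^{!}A)[1]$ are the tensor product $\Omega(P)\otimes_{\Omega(M)} C(A)$, and then deduce the second isomorphism from the first via the description of $\Pi[1]$-relative vector fields as $C(\pi^{!}A)\otimes_{C(A)} C_{\mathrm{def}}(A)$. The paper's proof is exactly this graded-geometry argument, stated a bit more tersely; your alternative route via the short exact sequence (\ref{eq:SES}) is not used there but is a reasonable concrete check.
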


\begin{proof}
The simplest proof is via graded geometry. Consider the pull-back
diagram
\begin{equation*}
\begin{array}{c}
\xymatrix{ \pi ^! A \ar[r]^-\rho \ar[d]_-{\Pi } & TP \ar[d]^-{\textrm{d}\pi } \\
A \ar[r]^-\rho & TM }
\end{array}
.
\end{equation*}
The top row consists of Lie algebroids over $P$, while the bottom row
consists of Lie algebroids over $M$. Shifting by $1$ the degree in the
fibers of all of them, we get a diagram of
$\mathbb{N} Q$-manifolds
%
\begin{equation}
\label{eq:shift_diag}
\begin{array}{c}
\xymatrix{ \pi ^! A[1] \ar[r] \ar[d] & T[1]P \ar[d] \\
A[1] \ar[r] & T[1] M }
\end{array}
.
\end{equation}
It follows from the functorial properties of the shift that
(\ref{eq:shift_diag}) is a pull-back diagram as well. Hence functions
on $\pi ^{!} A[1]$ are the tensor product over functions on $T[1] M$ of
the functions on $T[1] P$ and the functions on $A[1]$. This is
precisely the content of the first isomorphism in (\ref{eq:isom}). The
second isomorphism immediately follows from the first one and the fact
that $\Pi [1]$-relative vector fields are the tensor product over
functions on $A[1]$ of vector fields on $A[1]$ and functions on
$\pi ^{!} A [1]$.
\end{proof}

\section{Morita invariance of the deformation cohomology}

The de Rham cohomologies of Lie algebroids are Morita invariant. More
precisely, we have the following theorem due to Crainic.

\begin{theorem}[Crainic \cite{C2003}]
\label{theor:dR_cohom}
Let $A \Rightarrow M$ be a Lie algebroid and let $\pi : P \to M$ be a
surjective submersion with homologically $m$-connected fibers, then
$A$ and the pull-back Lie algebroid $\pi ^{!} A$ share the same de Rham
cohomology up to degree $m$. Specifically, the graded vector space map
$\Pi ^{\ast }: H_{\mathrm{dR}}(A) \to H_{\mathrm{dR}}( \pi ^{!} A)$ is an
isomorphism in degree $q \leq m$.
\end{theorem}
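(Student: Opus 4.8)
The plan is to reduce the statement to a Leray–Hirsch / spectral-sequence argument applied to the bigraded complex furnished by Lemma~\ref{lemma}. By that lemma we have $C(\pi^! A) = \Omega(P) \otimes_{\Omega(M)} C(A)$, and the Lie algebroid de Rham differential on the left respects the filtration coming from the form-degree on the $\Omega(P)$-factor (measured relative to $\Omega(M)$, i.e.\ the vertical form degree along the fibers of $\pi$). First I would set up this filtration carefully and write down the associated spectral sequence; the key point is that the map $\Pi^\ast$ is the inclusion of the bottom filtration step, $1 \otimes C(A) \hookrightarrow \Omega(P) \otimes_{\Omega(M)} C(A)$, so proving that $\Pi^\ast$ is an isomorphism in degrees $\leq m$ amounts to showing that the higher filtration steps contribute nothing to cohomology in that range.

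The analytic input is exactly what makes Theorem~\ref{theor:dR_cohom} true: since $\pi$ has homologically $m$-connected fibers, the fiberwise de Rham cohomology $H^q_{\mathrm{fib}}$ vanishes for $1 \leq q \leq m$ and equals the trivial coefficient system in degree $0$. Concretely I would invoke (or reprove by a partition-of-unity / Mayer–Vietoris patching over $M$, as in Crainic's original argument) that the natural map $\Omega(M) \to \Omega(P)$ is a quasi-isomorphism onto $\Omega(P)$ through degree $m$ in the appropriate relative sense; tensoring this quasi-isomorphism with the fixed complex $C(A)$ over $\Omega(M)$ then yields that $C(A) \to C(\pi^! A)$ is a quasi-isomorphism in degrees $q \leq m$. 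One has to be mildly careful that $C(A)$ is not flat over $\Omega(M)$ in general, so the cleanest route is to run the spectral sequence of the vertical filtration on $C(\pi^! A)$: its $E_1$-page is $\Omega(\text{fibers}; \text{cohom.}) $-type data tensored with $C(A)$, which by $m$-connectedness of the fibers is concentrated in vertical degree $0$ through total degree $m$, where it reduces to $H^\bullet(C(A))=H_{\mathrm{dR}}(A)$. Convergence of the spectral sequence then gives the claimed isomorphism in degrees $q\leq m$, and the edge homomorphism is precisely $\Pi^\ast$.

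For the second half of the theorem I would repeat the argument verbatim with the second isomorphism of Lemma~\ref{lemma}, $C(\Pi) = \Omega(P) \otimes_{\Omega(M)} C_{\mathrm{def}}(A)$, replacing the module $C(A)$ by $C_{\mathrm{def}}(A)$ throughout; nothing in the spectral-sequence bookkeeping changes, since it only used that we are tensoring a \emph{fixed} DG $\Omega(M)$-module with the quasi-isomorphism $\Omega(M)\hookrightarrow\Omega(P)$. This shows $\Pi^\star : C_{\mathrm{def}}(A) \to C(\Pi)$ is a quasi-isomorphism in degrees $\leq m$. The final step, and the one requiring the most care, is to upgrade this into a statement about $H_{\mathrm{def}}(\pi^! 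A)$ itself: I would use the short exact sequence \eqref{eq:SES} to compare $C_{\mathrm{def}}(\pi^! A)$ with $C(\Pi)$ — the fibers of $\Pi_\star : C_{\mathrm{def}}(\pi^! A) \to C(\Pi)$ are controlled by $VP$, the vertical bundle of an $m$-connected fibration, and a partition-of-unity argument along $\pi$ (again as in \cite{C2003}) shows the kernel is acyclic through degree $m$. Chasing this through, $\Pi_\star$ is a quasi-isomorphism in degrees $\leq m$, and composing with $\Pi^\star$ yields that $A$ and $\pi^! A$ share the same deformation cohomology up to degree $m$. The main obstacle is this last comparison between $C_{\mathrm{def}}(\pi^! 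A)$ and $C(\Pi)$: unlike the de Rham case, multiderivations of $\pi^! A$ do not split cleanly off the relative object, so one has to handle the "vertical" derivations (those detecting the $VP$-direction) by hand and check they do not obstruct anything below degree $m+1$.
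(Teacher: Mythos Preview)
Your first two paragraphs are essentially the paper's own proof: filter $C(\pi^{!}A)$ using the subalgebroid $VP\hookrightarrow\pi^{!}A$ from \eqref{eq:SES} (equivalently, by vertical form degree in the tensor description of Lemma~\ref{lemma}), identify $E_0^{p,q}=V\Omega^q\otimes_{C^\infty(M)}C^p(A)$ with $\mathrm d_0=\mathrm d^V$, use homological $m$-connectedness of the fibers to get $E_1^{\bullet,q}=0$ for $0<q\le m$ and $E_1^{\bullet,0}=(C(A),\mathrm d_A)$, and conclude via the edge map, which is $\Pi^\ast$. That is exactly the argument the paper records.

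Your third paragraph, however, is not part of the statement you were asked to prove: Theorem~\ref{theor:dR_cohom} is purely about de Rham cohomology and says nothing about $C_{\mathrm{def}}$, $C(\Pi)$, or $\Pi_\star$. You have merged it with the separate Theorem~\ref{theor:def_cohom}. If you do want to carry the argument over to that setting, note one substantive discrepancy with the paper: you propose to show $\ker\Pi_\star$ is acyclic only through degree $m$ via a partition-of-unity argument depending on the connectedness hypothesis, whereas the paper proves $\ker\Pi_\star$ is acyclic in \emph{all} degrees, with no connectedness assumption at all, by an explicit contracting homotopy (write a vertical vector field on $T[1]P$ as $\mathcal L_J+i_K$ and send it to $(-)^{|V|}i_J$). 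So your ``main obstacle'' is in fact not an obstacle; the connectedness hypothesis is used only in step~(ii), the $\Pi^\star$ half, exactly as in the de Rham case.
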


\begin{proof}
We briefly recall Crainic's proof. This will be useful in the following.
Begin noticing that the inclusion $VP \hookrightarrow \pi ^{!} A$ is the
inclusion of a Lie subalgebroid. Accordingly, there is a distinguished
subcomplex, and an ideal, $F_{1} C \subset C(\pi ^{!} A)$ consisting of
cochains vanishing when applied to sections of $VP$. Hence $C(\pi ^{!}
A)$ is canonically equipped with a filtration
\begin{equation*}
C(\pi ^{!} A) = F_{0} C \supset F_{1} C \supset \cdots \supset F_{p} C
\supset \cdots
\end{equation*}
where $F_{p} C $ is the $p$-th exterior power of $F_{1} C$. We denote
by $E$ the associated (first quadrant) spectral sequence which
computes $H_{\mathrm{dR}} (\pi ^{!} A)$. It follows from the short
exact sequence (\ref{eq:SES}) that
\begin{equation*}
E_{0}^{p,q} = F_{p} C^{p+q} / F_{p+1} C^{p+q} = V\Omega ^{q} \otimes C
^{p} (A)
\end{equation*}
where $V\Omega = \Gamma (\wedge ^{\bullet }V^{\ast }P)$ are vertical
differential forms on $P$, and the tensor product is over $C^{\infty
}(M)$. Now, it is easy to see that differential
\begin{equation*}
\mathrm{d}_{0}^{p, \bullet } : V\Omega ^{\bullet }\otimes C^{p} (A) \to V
\Omega ^{\bullet + 1} \otimes C^{p} (A)
\end{equation*}
is just the vertical de Rham differential $\mathrm{d}^{V}$ (up to tensoring by
$C^p (A)$), and, from the connectedness hypothesis, we have $H^{0}(V
\Omega , \mathrm{d}^{V}) = C^{\infty }(M)$, and $H^{q}(V\Omega , \mathrm{d}^{V}) = 0$ for
$0 < q \leq m$. Hence $E_{1}^{\bullet , 0} = C(A)$, and
\begin{equation*}
E_{1}^{\bullet , q} = H^{q} ( V\Omega ^{\bullet }\otimes C (A), \mathrm{d}^{V})
= 0 \quad \text{for $0 < q \leq m$}.
\end{equation*}
Additionally $\mathrm{d}_{1}^{\bullet , 0} : C(A) \to C(A)$ is precisely the de
Rham differential $\mathrm{d}_{A}$. We conclude that
\begin{equation*}
E_{2}^{p, q} = E_{\infty }^{p, q} = 0 \quad \text{for $1 < p + q
\leq m$},
\end{equation*}
and\nopagebreak
\begin{equation*}
H^{q}_{\mathrm{dR}}(A) = E_{2}^{0, q} = E_{\infty }^{0, q} = H^{q}
_{\mathrm{dR}} (\pi ^{!} A) \quad \text{for $q \leq m$}.
\end{equation*}

\end{proof}

We now come to our main result. The following theorem (and its
corollary) is our version of the Morita invariance of the deformation
cohomology.

\begin{theorem}
\label{theor:def_cohom}
Let $A \Rightarrow M$ be a Lie algebroid and let $\pi : P \to M$ be a
surjective submersion with homologically $m$-connected fibers, then
$A$ and the pull-back Lie algebroid $\pi ^{!} A$ share the same
deformation cohomology up to degree $m$.
\end{theorem}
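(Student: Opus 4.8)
The plan is to mimic Crainic's spectral sequence argument from the proof of Theorem~\ref{theor:dR_cohom}, but now carried out for the deformation complex rather than the de Rham complex, using the second isomorphism of Lemma~\ref{lemma} as the key structural input. Recall that $C(\Pi) = \Omega(P) \otimes_{\Omega(M)} C_{\mathrm{def}}(A)$, and that the inclusion $VP \hookrightarrow \pi^{!}A$ induces a filtration $F_{\bullet}C$ on $C(\pi^{!}A)$; the first thing I would do is transport this filtration to $C(\Pi)$ and to $C_{\mathrm{def}}(\pi^{!}A)$, exploiting that $C_{\mathrm{def}}(\pi^{!}A) = \mathfrak{X}(\pi^{!}A[1])$ and $C(\Pi)$ consists of sections of $(\Pi[1])^{*}(TA[1])$. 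Concretely, a vector field on $\pi^{!}A[1]$ has a ``horizontal order'' counting how many of its $\Omega(P)$-style (i.e.\ $VP$-dual) coordinate covectors it consumes versus produces; this gives first-quadrant filtrations on all three complexes, compatible with the DG-module maps $\Pi_{\star}$ and $\Pi^{\star}$.

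Next I would identify the $E_{0}$ and $E_{1}$ pages. For $C(\Pi)$, the isomorphism $C(\Pi) = \Omega(P) \otimes_{\Omega(M)} C_{\mathrm{def}}(A)$ together with the short exact sequence~\eqref{eq:SES} should give $E_{0}^{p,q} = V\Omega^{q} \otimes C^{p}_{\mathrm{def}}(A)$ (tensor over $C^{\infty}(M)$), with $\mathrm{d}_{0}$ the vertical de Rham differential $\mathrm{d}^{V}$ tensored with the identity on $C^{p}_{\mathrm{def}}(A)$ --- exactly as in Crainic's computation, only with $C^{p}(A)$ replaced by $C^{p}_{\mathrm{def}}(A)$. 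The homological $m$-connectedness hypothesis then yields $E_{1}^{\bullet,0} = C_{\mathrm{def}}(A)$ and $E_{1}^{\bullet,q} = 0$ for $0 < q \leq m$, and $\mathrm{d}_{1}^{\bullet,0}$ is the deformation differential $\delta = [\mathrm{d}_{A},-]$. Hence $E_{2}^{p,q} = E_{\infty}^{p,q} = 0$ for $1 < p+q \leq m$ and $H^{q}(C(\Pi)) = H^{q}_{\mathrm{def}}(A)$ for $q \leq m$, the map being $\Pi^{\star}$. So $\Pi^{\star}: C_{\mathrm{def}}(A) \to C(\Pi)$ is a quasi-isomorphism in degrees $\leq m$.

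It then remains to show that $\Pi_{\star}: C_{\mathrm{def}}(\pi^{!}A) \to C(\Pi)$ is also a quasi-isomorphism up to degree $m$; composing the two gives the claimed isomorphism $H^{q}_{\mathrm{def}}(A) \cong H^{q}_{\mathrm{def}}(\pi^{!}A)$ for $q \leq m$. For this I would again run the induced filtration, now on $C_{\mathrm{def}}(\pi^{!}A)$ and $C(\Pi)$ simultaneously, and compare their spectral sequences. On $E_{0}$ the map $\Pi_{\star}$ should become, degreewise, the map induced by the projection $\pi^{!}A \to \pi^{*}A$ from~\eqref{eq:SES}: a vector field on $\pi^{!}A[1]$ that is ``$\Pi$-projectable'' maps to the corresponding relative vector field. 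The key fibrewise input is that for the purely-vertical directions the relevant complex is acyclic in positive degree (again by $m$-connectedness via $\mathrm{d}^{V}$), so the extra ``vertical'' vector-field directions in $C_{\mathrm{def}}(\pi^{!}A)$ not seen by $C(\Pi)$ contribute nothing to cohomology up to degree $m$; on the $q=0$ row $\Pi_{\star}$ induces an isomorphism $E_{1}^{\bullet,0}(C_{\mathrm{def}}(\pi^{!}A)) \cong E_{1}^{\bullet,0}(C(\Pi)) = C_{\mathrm{def}}(A)$ compatible with $\delta$.

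The main obstacle I anticipate is precisely this last comparison: verifying that $\Pi_{\star}$ is a filtered quasi-isomorphism. Unlike $\Pi^{\star}$, whose target-to-source behaviour is transparent from Lemma~\ref{lemma}, the map $\Pi_{\star}: X \mapsto X \circ \Pi^{*}$ forgets information --- it only records how $X$ acts on pulled-back functions --- so one must carefully analyse the kernel and cokernel of $\Pi_{\star}$ on $E_{0}$, showing these are built out of $V\Omega^{>0}$-type pieces that die on $E_{1}$ in the range $q \leq m$. Equivalently, one identifies $C_{\mathrm{def}}(\pi^{!}A)$ with sections of $T(\pi^{!}A[1])$, splits off the ``vertical'' summand using~\eqref{eq:SES} and a choice of connection, and checks that the vertical summand's $\mathrm{d}_{0}$-cohomology vanishes up to degree $m$. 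Once that bookkeeping is done, the two quasi-isomorphisms chain together and the theorem follows; the corollary (Morita invariance for weak Morita equivalent algebroids) is then immediate by applying the theorem to both legs $\pi$ and $\tau$.
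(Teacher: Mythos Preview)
Your treatment of $\Pi^{\star}$ is exactly the paper's argument: filter $C(\Pi)$ by the $F_{\bullet}C$-module structure, identify $E_{0}^{p,q} = V\Omega^{q}\otimes C^{p}_{\mathrm{def}}(A)$ with $\mathrm{d}_{0}=\mathrm{d}^{V}$, and use homological $m$-connectedness to collapse the spectral sequence in the range $q\leq m$.

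Where you diverge is in handling $\Pi_{\star}$. The paper does \emph{not} run a second spectral sequence here; instead it observes that $\Pi_{\star}$ is surjective, identifies $\mathcal{K}=\ker\Pi_{\star}$ with $C(A)\otimes_{\Omega(M)}\mathcal{V}$, where $\mathcal{V}$ is the module of vector fields on $T[1]P$ vertical over $T[1]M$, and then writes down an explicit contracting homotopy for $\mathcal{V}$ using the Cartan decomposition $V=\mathcal{L}_{J}+i_{K}$ and $h_{\mathcal{V}}(V)=(-)^{|V|}i_{J}$. This shows $\Pi_{\star}$ is a quasi-isomorphism in \emph{all} degrees, with no connectedness hypothesis needed. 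Your proposed spectral-sequence comparison for $\Pi_{\star}$ is plausibly workable, but the filtration on $C_{\mathrm{def}}(\pi^{!}A)=\mathfrak{X}(\pi^{!}A[1])$ is genuinely more delicate than on a mere module (a derivation can shift filtration in both directions), and your description of the kernel as ``$V\Omega^{>0}$-type pieces'' is not quite right---it consists of vertical \emph{vector fields}, whose acyclicity is a Cartan-calculus fact rather than a consequence of $m$-connectedness. The paper's contracting homotopy sidesteps this entirely and yields a sharper statement; it is also what you would need if you later wanted to compare the full deformation cohomologies beyond degree $m$.
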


\begin{proof}
The present proof is inspired by the Crainic, Mestre, and Struchiner proof
of the Morita invariance of the deformation cohomology of Lie groupoids
\cite{CMS2015}. However, notice that, in our statement, the Lie
algebroid $A$ needs not to be integrable. Consider the DG module maps
\begin{equation*}
C_{\mathrm{def}} (\pi ^{!} A)
\mathop{\longrightarrow }^{\Pi _{\star }} C (\Pi )
\mathop{\longleftarrow }^{\Pi ^{\star }} C_{\mathrm{def}} (A).
\end{equation*}
Our strategy consists in proving that
\begin{enumerate}
\item[(i)]
$\Pi _{\star }$ is a quasi-isomorphism (regardless the connectedness
properties of the fibers of $\pi $);
\item[(ii)]
$\Pi ^{\star }$ induces an isomorphism in cohomology up to degree $m$.
\end{enumerate}
We begin with (i). As $\pi $ is a surjective submersion, then
$\Pi _{\star }$ is surjective. Actually it consists in restricting a
derivation of $C(\pi ^{!} A)$ to the DG subalgebra $C(A) \hookrightarrow
C(\pi ^{!} A)$. Geometrically, it consists in composing a vector field
on $\pi ^{!} A [1]$ with projection $\textrm{d} \Pi [1] : T \pi ^{!} A[1]
\to TA[1]$. Denote $\mathcal{K} := \ker \Pi _{\star }$ and consider the
short exact sequence of DG modules
\begin{equation*}
0 \mathop{\longrightarrow } \mathcal{K} \mathop{\longrightarrow } C_{\mathrm{def}} (\pi ^{!} A) \mathop{\longrightarrow }^{\Pi _{\star }} C(\Pi ) \mathop{\longrightarrow } 0.
\end{equation*}
It is enough to show that $\mathcal{K}$ is acyclic. To do this, we
construct a contracting homotopy $h : \mathcal{K} \to \mathcal{K}$ for
$(\mathcal{K}, \delta )$. Notice that $\mathcal{K}$ consists of
derivations of $C(\pi ^{!} A)$ vanishing on $C(A)$, equivalently it
consists of vector fields on $\pi ^{!} A[1]$ that are the vertical wrt
projection $\Pi [1] : \pi ^{!} A[1] \to A[1]$. As (\ref{eq:shift_diag})
is a pull-back diagram, the vector fields in $\mathcal{K}$ are completely
determined by their composition with $T \pi ^{!} A[1] \to T T[1] P$. This
shows that there is a DG module isomorphism
\begin{equation*}
\mathcal{K} \mathop{\longrightarrow }^{\simeq } C(\pi ^{!} A) \!\mathop{\otimes }_{
\Omega (P)}\! \mathcal{V} = C(A) \!\mathop{\otimes }_{\Omega (M)}\! \mathcal{V}
\end{equation*}
where $\mathcal{V}$ is the DG $\Omega (P)$-module of vector fields on
$T[1] P$ that are vertical with respect to projection $T[1] P \to T[1] M$. Now we
construct a contracting homotopy $h_{\mathcal{V}} : \mathcal{V}
\to \mathcal{V}$ for $\mathcal{V}$. First recall that the differential
in $\mathcal{V}$ is the commutator with the de Rham differential
$\mathrm{d} : \Omega (P) \to \Omega (P)$. Now, every vector field $V$ on
$T[1] P$ is a derivation of $\Omega (P)$. Hence, it can be uniquely
written in the form $V = \mathcal{L}_{J} + i_{K}$, where $J, K$ are form
valued vector fields on $P$. Additionally, $V$ is vertical with respect to
$T[1] P \to T[1] M$ if and only if $J, K$ are both vertical wrt $P \to M$. Put
$h_{\mathcal{V}} (V) := (-)^{|V|} i_{J}$, so that $V = [\mathrm{d}, h_{
\mathcal{V}} (V)] + h_{\mathcal{V}}([\mathrm{d}, V])$, showing that $h_{
\mathcal{V}}$ is indeed a contracting homotopy. It is easy to see that
$h_{\mathcal{V}}$ is $\Omega (M)$-linear and we define
\begin{equation*}
h : C(A) \!\mathop{\otimes }_{\Omega (M)}\! \mathcal{V} \to C(A) \!\mathop{\otimes }_{
\Omega (M)}\! \mathcal{V}, \quad \omega \otimes V \mapsto (-)^{|
\omega |} \omega \otimes h_{\mathcal{V}}(V).
\end{equation*}
A straightforward computation shows that $h$ is a contracting homotopy.
Hence $\mathcal{K}$ is acyclic.

Now we prove (ii). The proof is analogous to that of Theorem~\ref{theor:dR_cohom}. As $C (\Pi )$ is a DG $C(\pi ^{!} A)$-module, there
is a filtration
\begin{equation*}
C(\Pi ) = F_{0} C \cdot C(\Pi ) \supset F_{1} C \cdot C(\Pi ) \supset
\cdots \supset F_{p} C \cdot C(\Pi ) \supset \cdots
\end{equation*}
hence a spectral sequence computing the cohomology of $C(\Pi )$, that
we denote again by $E$. From Lemma~\ref{lemma} and from (\ref{eq:SES})
again, we have
\begin{equation*}
E_{0}^{p,q} = F_{p} C^{p+q} \cdot C(\Pi ) / F_{p+1} C^{p+q} \cdot C(
\Pi ) = V\Omega ^{q} \otimes C_{\mathrm{def}}^{p} (A),
\end{equation*}
where the tensor product is over $C^{\infty }(M)$, and the differential
$\mathrm{d}_{0} : E_{0} \to E_{0}$ is the vertical de Rham differential
$\mathrm{d}^{V} : V\Omega \to V \Omega $ (tensorized by $C_{\mathrm{def}}^{
\bullet }(A)$). Now the proof proceeds exactly as the proof of
Theorem~\ref{theor:dR_cohom}, and we leave the details to the reader.
\end{proof}

\begin{corollary}
Let $A \Rightarrow M$ and $B \Rightarrow N$ be (weak) Morita equivalent
Lie algebroids. Then $A$ and $B$ share the same $0$-th and $1$-st
deformation cohomology. If, additionally, the Morita equivalence is
realized by surjective submersions $M \leftarrow P \rightarrow N$ with
homologically $m$-connected fibers, then $A$ and $B$ share the same
deformation cohomology up to degree $m$.
\end{corollary}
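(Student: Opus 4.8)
The plan is to deduce the corollary from two applications of Theorem~\ref{theor:def_cohom}, one for each leg of the Morita equivalence, glued along an isomorphism of pull-back Lie algebroids. By Definition~\ref{def:Morita_eq} there are surjective submersions $\pi : P \to M$ and $\tau : P \to N$ with simply connected fibers, together with a Lie algebroid isomorphism $\Phi : \pi^{!} A \to \tau^{!} B$. The first point to record is that a simply connected manifold is homologically $1$-connected: being connected it has vanishing reduced $H^{0}$, and its first homology group is the abelianization of $\pi_{1}$ by the Hurewicz theorem, hence $H^{1} = 0$. Consequently the fibers of $\pi$ and of $\tau$ are homologically $1$-connected, and the unconditional part of the corollary becomes the special case $m = 1$ of the conditional part.

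Thus it is enough to prove the conditional assertion. Applying Theorem~\ref{theor:def_cohom} to $\pi$ and then to $\tau$, the maps $\Pi_{\star}$ (and its $\tau$-analogue) are quasi-isomorphisms, while $\Pi^{\star}$ (and its $\tau$-analogue) induce isomorphisms in cohomology in degrees $q \le m$; composing the relevant one with the inverse of the other, we obtain graded vector space isomorphisms $H^{q}_{\mathrm{def}}(A) \cong H^{q}_{\mathrm{def}}(\pi^{!} A)$ and $H^{q}_{\mathrm{def}}(B) \cong H^{q}_{\mathrm{def}}(\tau^{!} B)$ for $q \le m$. On the other hand, the Lie algebroid isomorphism $\Phi$ induces a DG Lie algebra isomorphism $C_{\mathrm{def}}(\pi^{!} A) \cong C_{\mathrm{def}}(\tau^{!} B)$ by the (purely formal) functoriality of the deformation complex: indeed $\Phi[1] : \pi^{!} A[1] \to \tau^{!} B[1]$ is a diffeomorphism of $\mathbb{N} Q$-manifolds, and conjugation by it carries a vector field commuting with $\mathrm{d}_{\pi^{!} A}$ to one commuting with $\mathrm{d}_{\tau^{!} B}$; hence it is an isomorphism in deformation cohomology in every degree. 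Splicing the three together gives
\[
H^{q}_{\mathrm{def}}(A) \;\cong\; H^{q}_{\mathrm{def}}(\pi^{!} A) \;\cong\; H^{q}_{\mathrm{def}}(\tau^{!} B) \;\cong\; H^{q}_{\mathrm{def}}(B), \qquad q \le m,
\]
which is the desired statement.

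There is no real obstacle here beyond bookkeeping on top of Theorem~\ref{theor:def_cohom}. The two points worth spelling out are the implication ``simply connected $\Rightarrow$ homologically $1$-connected'', which is exactly what turns the unconditional assertion into a particular case of the conditional one, and the functoriality of $C_{\mathrm{def}}$ under isomorphisms of Lie algebroids, which is what allows the deformation cohomology to be transported across $\pi^{!} A \cong \tau^{!} B$.
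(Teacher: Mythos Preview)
Your argument is correct and is exactly the intended one: the paper states the corollary without proof, treating it as an immediate consequence of Theorem~\ref{theor:def_cohom} applied to each leg of the Morita equivalence together with the isomorphism $\pi^{!}A \cong \tau^{!}B$, and you have simply spelled out those details (including the observation that simply connected fibers are homologically $1$-connected).
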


\section{An illustrative example: deformations of weak Morita equivalent foliations}

Let $M$ be a manifold and let $\mathcal{F}$ be a foliation of $M$.
The deformations of $\mathcal{F}$ are controlled by $\Omega (\mathcal{F} , TM/T
\mathcal{F})$: leafwise differential
forms with values in the Bott representation  \cite{H1973}. Cochain complex $\Omega (\mathcal{F} ,
TM/T \mathcal{F})$ is a deformation retract of the deformation complex
$C_{\mathrm{def}} (T\mathcal{F})$ \cite{CM2008,V2014}. In particular,
$\Omega (\mathcal{F} , TM/T \mathcal{F})$ and $C_{\mathrm{def}} (T
\mathcal{F})$ share the same cohomology. This means that, morally,
deforming a foliation $\mathcal{F}$ or its tangent algebroid
$T\mathcal{F}$ is the same. This should be expected from the fact that
small deformations of $T \mathcal{F}$ preserve the injectivity of the
anchor.

Now let $\mathcal{V} \subset \mathcal{H}$ be a flag of foliations of a
manifold $P$. In other words, $\mathcal{V}$ and $\mathcal{H}$ are
foliations, and the leaves of $\mathcal{V}$ are contained into leaves
of $\mathcal{H}$. Yet in other terms $T \mathcal{V} \subset T
\mathcal{H}$. Assume that $\mathcal{V}$ is simple, i.e.~its leaf space
$M$ is a manifold and the projection $\pi : P \to M$ is a surjective
submersion. In other words, $\mathcal{V} = VP$: the vertical bundle of
$P$ with respect to $\pi $. From involutivity
$\pi _{\ast }(T \mathcal{H}) = T \mathcal{F}$ for a, necessarily unique,
foliation $\mathcal{F}$ of $M$, and $T \mathcal{H} = (\textrm{d}\pi )^{-1} (T
\mathcal{F})$. It is then immediate to see that $T \mathcal{H} = \pi
^{!} T \mathcal{F}$: the pull-back Lie algebroid. So, if $P$ has
homologically $m$-connected fibers, $T \mathcal{F}$ and $T
\mathcal{H}$ share the same deformation cohomology up to degree $m$.
Now, using that $\Omega (\mathcal{F} , TM/T \mathcal{F})$ is a
deformation retract of $C_{\mathrm{def}} (T \mathcal{F})$ (and similarly
for $\mathcal{H}$) we immediately get the following theorem.

\begin{theorem}
\label{theor:fol}
Let $\mathcal{V} \subset \mathcal{H}$ be a flag of foliations on $P$.
Assume that $\mathcal{V}$ is simple and let $\mathcal{F}$ be the
foliation induced by $\mathcal{H}$ on the leaf space of $\mathcal{V}$
via projection. If the leaves of $\mathcal{V}$ are $m$-simply connected,
then $\mathcal{F}$ and $\mathcal{H}$ share the same deformation
cohomology up to degree $m$.
\end{theorem}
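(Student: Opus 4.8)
The plan is to deduce Theorem~\ref{theor:fol} by combining Theorem~\ref{theor:def_cohom} with the already-cited fact that, for a foliation $\mathcal{F}$, the complex of leafwise forms with values in the Bott representation $\Omega(\mathcal{F}, TM/T\mathcal{F})$ is a deformation retract of $C_{\mathrm{def}}(T\mathcal{F})$, and in particular the two have canonically isomorphic cohomology. So the proof is essentially a bookkeeping argument that strings together three identifications; there is no new hard estimate. First I would observe that the hypothesis ``$\mathcal{V}$ is simple'' gives a surjective submersion $\pi : P \to M$ onto the leaf space $M = P/\mathcal{V}$ with $VP = T\mathcal{V}$, and that ``the leaves of $\mathcal{V}$ are $m$-simply connected'' is precisely the statement that the fibers of $\pi$ are homologically $m$-connected (by the Hurewicz theorem, an $m$-simply connected space is homologically $m$-connected). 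Then I would recall from the discussion preceding the theorem that involutivity of $\mathcal{H}$ together with $T\mathcal{V} \subset T\mathcal{H}$ forces $\pi_{\ast}(T\mathcal{H})$ to be an involutive distribution $T\mathcal{F}$ on $M$, that $T\mathcal{H} = (\mathrm{d}\pi)^{-1}(T\mathcal{F})$, and hence $T\mathcal{H} = \pi^{!}(T\mathcal{F})$ as Lie algebroids over $P$.

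Once that identification is in place, Theorem~\ref{theor:def_cohom} applied to the Lie algebroid $A = T\mathcal{F} \Rightarrow M$ and the submersion $\pi : P \to M$ yields that the map $\Pi^{\star}$ (or rather the composite $\Pi_{\star}^{-1}\circ\Pi^{\star}$ on cohomology, using that $\Pi_{\star}$ is a quasi-isomorphism) induces an isomorphism $H^{q}_{\mathrm{def}}(T\mathcal{F}) \cong H^{q}_{\mathrm{def}}(\pi^{!}T\mathcal{F}) = H^{q}_{\mathrm{def}}(T\mathcal{H})$ for all $q \leq m$. Finally I would invoke the deformation-retract statement twice---for $\mathcal{F}$ on $M$ and for $\mathcal{H}$ on $P$---to replace $H^{q}_{\mathrm{def}}(T\mathcal{F})$ by $H^{q}(\Omega(\mathcal{F}, TM/T\mathcal{F}))$ and $H^{q}_{\mathrm{def}}(T\mathcal{H})$ by $H^{q}(\Omega(\mathcal{H}, TP/T\mathcal{H}))$, concluding that these leafwise-form cohomologies agree up to degree $m$. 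Stringing these isomorphisms together gives the claim.

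The only point that requires a little care---and the place I would expect a referee to press---is the translation of the connectedness hypotheses into the hypotheses of Theorem~\ref{theor:def_cohom}: I want to be sure that ``$m$-simply connected leaves of $\mathcal{V}$'' is the right condition to guarantee ``homologically $m$-connected fibers of $\pi$'', and that the leaf of $\mathcal{V}$ through a point $p$ is indeed (an open subset of, or all of) the fiber $\pi^{-1}(\pi(p))$. Since $\mathcal{V}$ is simple by assumption, its leaves are exactly the connected components of the fibers of $\pi$, so this is harmless, but it is worth stating explicitly. Apart from this, the argument is a formal consequence of Theorem~\ref{theor:def_cohom} and the cited retraction result, so I would keep the proof short and essentially reduce it to the sequence of isomorphisms
\begin{equation*}
H^{q}\bigl(\Omega(\mathcal{F}, TM/T\mathcal{F})\bigr) \cong H^{q}_{\mathrm{def}}(T\mathcal{F}) \cong H^{q}_{\mathrm{def}}(\pi^{!}T\mathcal{F}) = H^{q}_{\mathrm{def}}(T\mathcal{H}) \cong H^{q}\bigl(\Omega(\mathcal{H}, TP/T\mathcal{H})\bigr), \qquad q \leq m,
\end{equation*}
leaving the verification of the details to the reader exactly as was done at the end of the proof of Theorem~\ref{theor:def_cohom}.
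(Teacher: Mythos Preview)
Your proposal is correct and follows exactly the paper's approach: the paper derives Theorem~\ref{theor:fol} in the paragraph preceding it by identifying $T\mathcal{H} = \pi^{!}T\mathcal{F}$, applying Theorem~\ref{theor:def_cohom}, and invoking the deformation retract $\Omega(\mathcal{F}, TM/T\mathcal{F}) \hookrightarrow C_{\mathrm{def}}(T\mathcal{F})$ on both sides. One small clarification: since $\mathcal{V}$ is simple, its leaves \emph{are} the fibers of $\pi$ (not merely their connected components), so your caveat about connected components is unnecessary.
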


It is now natural to define a weak Morita equivalence for foliated
manifolds. Namely, two foliated manifolds $(M, \mathcal{F})$ and
$(N, \mathcal{G})$ are \emph{weak Morita equivalent} if the tangent
algebroids of $\mathcal F$ and $\mathcal G$ are weak Morita equivalent. Theorem~\ref{theor:fol} then
reveals that if $(M, \mathcal{F})$ and $(N, \mathcal{G})$ are Morita
equivalent, then $\mathcal{F}$ and $\mathcal{G}$ share the same $0$-th
and $1$-st deformation cohomology. If, additionally, the Morita
equivalence is realized by surjective submersions with homologically
$m$-connected fibers, then $\mathcal{F}$ and $\mathcal{G}$ share the
same deformation cohomology up to degree $m$.

Finally, notice that the tangent algebroid of a foliation $
\mathcal{F}$ is always integrable. Any integration of $\mathcal{F}$ is
called a foliation groupoid \cite{CM2001}. It would be interesting to
explore the relationship between the weak Morita equivalence of
foliations and the Morita equivalence of their foliation groupoids
(particularly the monodromy and the holonomy groupoids). However, this
goes beyond the scopes of the present note.

\section{Final remarks}

There is another approach to the deformation cohomology of a Lie
algebroid. Namely, the deformation complex of a Lie algebroid
$A \Rightarrow M$ can be seen as the \emph{linear de Rham complex} of
the cotangent VB-algebroid $(T^{\ast }A \Rightarrow A^{\ast }) \to (A
\Rightarrow M)$ \cite{GSM2010,AAC2012}. Recall that a VB-algebroid is
a vector bundle object in the category of Lie algebroids, and its linear
de Rham complex is the subcomplex in the de Rham complex consisting of
cochains that are linear with respect to the vector bundle structure. It is possible
to define a notion of (weak) Morita equivalence for VB-algebroids
respecting the vector bundle structure. It is then natural to expect
that (1) if two Lie algebroids are (weak) Morita equivalent, then their
cotangent VB-algebroids are (weak) Morita equivalent, and (2) if two
VB-algebroids are (weak) Morita equivalent, then their linear de Rham
cohomologies are the same in low degree. If so, then Theorem~\ref{theor:def_cohom} would be an immediate corollary. This alternative
approach to the Morita invariance of the deformation cohomology of Lie
algebroids is actually being investigated in a separate work
\cite{LV2018}. We remark that Morita equivalence for VB-groupoids,
i.e.~vector bundle objects in the category of Lie groupoids, is defined
and discussed in \cite{dHO2016}, where the authors prove the Morita
invariance of the VB-groupoid cohomology. Finally, VB-groupoid
cohomology is related to VB-algebroid cohomology by a Van-Est type map
\cite{CD2017}.

\section*{Acknowledgments} LV is member of the GNSAGA of INdAM.

\end{document}